\documentclass{amsart}

\usepackage[utf8]{inputenc}
\usepackage[T1]{fontenc}
\usepackage{lmodern}

\usepackage{amssymb}
\usepackage{fancyhdr}
\pagestyle{fancy}
\usepackage{mathrsfs}
\usepackage[stretch=10,shrink=10]{microtype}
\usepackage[showframe=false, margin = 1.5 in]{geometry}
\usepackage{mathtools}
\usepackage{xifthen}
\usepackage[shortlabels]{enumitem}
\usepackage{stmaryrd}
\usepackage{tikz}
\usepackage{array}
%\usetikzlibrary{decorations.pathmorphing}
\usepackage{hyperref}
\hypersetup{
     colorlinks   = true,
     citecolor    = black
}

%\lhead{ARW is explosive}
\fancyhead[LO]{Explosivity in 1-d ARW}
\fancyhead[LE]{Explosivity in 1-d ARW}
%\fancyhead[RE]{Hoffman et. al.}
\fancyhead[RO]{Forien, Hoffman, Johnson, Meisel, Richey and Rolla}
\fancyhead[RE]{Forien, Hoffman, Johnson, Meisel, Richey and Rolla}

\newcommand{\E}{\mathbf{E}}

\newcommand{\bigmid}{\;\big\vert\;}

\renewcommand{\P}{\mathbf{P}}

\newcommand{\restrictedOp}[2]{#1\if\relax\detokenize{#2}\relax\else^{#2}\fi}

\newcommand{\s}{\mathfrak{s}}
\newcommand{\ZZ}{\mathbb{Z}}
\newcommand{\NN}{\mathbb{N}}
\newcommand{\RR}{\mathbb{R}}

\newcommand{\density}{\rho}
\newcommand{\crit}{\density_c}
\newcommand{\config}{\sigma}

\newcommand{\Xn}{X(n, \config)}
\newcommand{\Yn}{Y(n, \config)}
\newcommand{\wake}[1]{\mathsf{A}^{#1}}
\DeclarePairedDelimiter\abs{\lvert}{\rvert}%
\DeclarePairedDelimiter\ii{\llbracket}{\rrbracket}%

\newtheorem{thm}{Theorem}

\newtheorem{lemma}[thm]{Lemma}
\newtheorem{prop}[thm]{Proposition}

\theoremstyle{remark}
\newtheorem{remark}[thm]{Remark}
\theoremstyle{definition}
\newtheorem{definition}[thm]{Definition}

\usepackage{theoremref}

\begin{document}

\title{Explosivity in 1-d Activated Random Walk}
\author{Nicolas Forien}
\author{Christopher Hoffman}
\author{Tobias Johnson}
\author{Josh Meisel}
\author{Jacob Richey}
\author{Leonardo T.\ Rolla}

\begin{abstract}
We show that Activated Random Walk on $\ZZ$ is \emph{explosive} above criticality. That is, activating a single particle in a supercritical state of sleeping particles triggers an infinite avalanche of activity with positive probability.
This extends the same result recently proven by Brown, Hoffman, and Son for i.i.d.\ initial distributions to the setting of ergodic ones, thus completing the proof of a conjecture of Rolla's in dimension one. As a corollary we obtain that, for supercritical ergodic initial distributions with any positive density of particles initially active, the system will stay active almost surely. Our result is another piece of evidence attesting to the universality of the phase transition of Activated Random Walk on $\ZZ$.
\end{abstract} 
 \maketitle

\section{Introduction}

Activated Random Walk (ARW) is a stochastic interacting particle system consisting of indistinguishable particles which occupy the sites of a graph. The particles, each of which can be {\em{active}} or {\em{sleeping}}, move according to the following (local) rules: Active particles perform continuous-time simple symmetric random walks at rate $1$ and fall asleep at rate $\lambda > 0$ while alone. Sleeping particles do not move and become active whenever they are visited by an active particle. ARW is a close cousin of the Stochastic Sandpile, itself a stochastic version of the Abelian Sandpile. These processes (and many more) were developed as mathematical models of \emph{self-organized criticality (SOC)}, a statistical phenomenon observed in complex real-world systems. SOC aims at providing a heuristic and statistical description of natural systems, like forest fires, mountain landscapes, and earthquakes, and of some artificial ones, like stock markets and power grids, which evolve in time, exhibit cascading kinetic avalanches on all scales, and fluctuate around a universal critical state without external tuning \cite{BakTangWiesenfeld87,watkins2016twentyfive}.

% Fixing the dimension $d$, sleep rate $\lambda > 0$, and a translation-invariant transition kernel for the walks, Rolla, Sidoravicius, and Zindy demonstrated that there is a \emph{critical density} $\crit$ such that the system a.s.\ fixates starting from density $\density < \crit$, and a.s.\ stays active starting from density $\density > \crit$~\cite{rolla2019universality}. 
% Moreover, on fixation each site is visited finitely many times, while on activity each site is visited infinitely many times and in fact no particle fixates \cite{AmirGurel-Gurevich10, RollaTournier18}.

% Computer simulations support the conjecture that the so-called {\em{driven-dissipative}} version of ARW, which requires no specified density parameter or ergodic initial condition, converges (in a sense) as the system size increases to the critical fixed-energy critical state \cite{dickman2000paths, dickman2010activated, levine2023universality}. Rigorous progress in this direction has only recently begun \cite{HoffmanJohnsonJunge24, forienDensity}. 

Part of the core philosophy of SOC is universality: the same features of SOC should be visible for many different models.
In the context of ARW, we expect the same critical state to appear in many different formulations of the model
(see \cite{levine2023universality} for an overview). One aspect is that the self-organized critical state arising for
the \emph{driven-dissipative} version of the model, where particles are added to the interior but deleted at the boundary
of a finite box, is
thought to be the same as the critical state in a traditional absorbing-state phase transition for the \emph{fixed-energy} version on an infinite lattice, as originally proposed by
Dickman, Mu\~noz, Vespignani, and Zapperi \cite{dickman2000paths}. In this fixed-energy model, the system is initialized with a random configuration of particles $\config \colon \ZZ^d \to \NN \cup \{\s\} = \{0, \s, 1, 2, \ldots\}$ whose distribution is ergodic with average particle density $\density=\E\abs{\config(0)}$. (Here $\s$ denotes the presence of a single sleeping particle, and we let $\abs{\s} = 1$ so that $\abs{\config(x)}$ counts the number of particles at site $x$. Also, ergodicity is always with respect to the group of translations of $\ZZ^d$.) We say that an instance of this system \emph{fixates} if each vertex of $\ZZ^d$ is visited by an active particle finitely many times, and otherwise that it \emph{stays active}. 
Fixation is a zero-one event \cite[Lemma~4]{rolla2012absorbing}.
For a stochastically increasing family of initial distributions, like i.i.d.\ $\mathrm{Poisson}(\rho)$, it is a consequence of monotonicity that there exists some critical density $\crit$ such that ARW fixates if $\rho<\crit$ and stays active if $\rho>\crit$.

One sign of universality is that this phase transition, which a priori depends on the family of initial distributions (as well as the sleep rate $\lambda$
and dimension $d$), in fact depends only on their densities. That is, there exists a single $\crit=\crit(\lambda,d)$ such that for \emph{any} ergodic initial distribution with density $\rho$, the system fixates if $\rho<\crit$ and stays active if $\rho>\crit$, as shown by Rolla, Sidoravicius, and Zindy \cite{RollaSidoraviciusZindy19}. (Notably,
the behavior of the system is not known at criticality, nor is it known whether it depends on the initial distribution.)
This result applies only when the system starts with all particles active. Rolla conjectured that this requirement could be dropped. He also conjectured
the stronger statement that
at supercritical density with all but one particle asleep, ARW remains active with positive probability \cite[p.~517]{rolla2020activated}.

Recently, Dickman et al.'s prediction that the density of the driven-dissipative
model's stationary distribution on a box converges to $\crit$ as the box size grows, known
as the \emph{density conjecture}, was proven in dimension one \cite{HoffmanJohnsonJunge24}.
Using the same technology, Brown, Hoffman, and Son proved Rolla's conjecture in dimension one, restricted to the case of an i.i.d.\ initial distribution of particles \cite{explosivity}.
Our contribution here is to remove this restriction on the initial distribution, proving the conjecture for all ergodic initial distributions,
with a shorter and conceptually simpler proof.

To state our results, call a configuration $\sigma\colon\ZZ \to\NN \cup \{\s\}$ \emph{explosive} if ARW stays active (i.e.,\ each site is visited i.o.)\ with positive probability when started from $\sigma$ with particles activated on some nonvacant site.
We note that if ARW has positive probability of staying active starting from $\sigma$ when
particles are activated at some given nonvacant site $x$, then the same is true for 
\emph{any} nonvacant site $y$, since with positive probability a particle at $y$
marches to $x$ and back without falling asleep. Furthermore, if $\sigma$
has active particles, then ARW has positive probability of remaining active from $\sigma$
without waking anything.

\begin{thm}\thlabel{thm:explosive}
%    \thref{conj:explode} holds in dimension $d=1$.
  Let $\sigma$ be a random element in $(\NN \cup \{\s\})^\ZZ$ with an ergodic
  distribution and density $\E\abs{\config(0)} = \density$. Then $\sigma$ is 
  a.s.\ explosive if $\density > \crit$ and a.s.\ not explosive if $\density < \crit$.
\end{thm}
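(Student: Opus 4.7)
The plan is to handle the two directions of \thref{thm:explosive} separately.

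\textbf{Subcritical direction ($\density < \crit$).} I would argue by monotonicity using \cite{RollaSidoraviciusZindy19}. Starting from $\sigma$ with a single particle activated is dominated, in the natural sense that more active particles can only yield more activity, by starting from the configuration obtained by additionally waking every sleeping particle in $\sigma$. This dominating configuration is ergodic with density $\density < \crit$ and consists entirely of active particles, so by \cite{RollaSidoraviciusZindy19} it fixates almost surely. Monotonicity then shows the original one-activated system also fixates, so $\sigma$ is a.s.\ not explosive.

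\textbf{Supercritical direction ($\density > \crit$).} The event that $\sigma$ is explosive is translation-invariant, because (as noted just before the theorem) explosivity does not depend on the choice of nonvacant activation site. By ergodicity it has probability $0$ or $1$, so it suffices to show it is strictly positive. My plan is as follows. Fix $\epsilon > 0$ with $\density - 2\epsilon > \crit$. By Birkhoff's ergodic theorem, the empirical density of $\sigma$ on $\ii{-n, n}$ exceeds $\density - \epsilon$ for all sufficiently large $n$, almost surely. Now consider the stabilization of $\sigma$ restricted to $\ii{-n,n}$ with \emph{all} particles active and an absorbing boundary. The density conjecture of \cite{HoffmanJohnsonJunge24} forces the stable density to be at most $\crit + o(1)$, so mass conservation implies that at least $(\density - \crit - o(1))(2n+1)$ particles must exit the box, yielding a linear lower bound on the total odometer and on crossing counts at many edges.

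The final step is to transfer this odometer divergence from the fully-active stabilization to the one in which only a single particle is activated. Using the Abelian property, I would choose a toppling order in which the single activated particle walks and successively wakes neighbors it encounters; the goal is to show that ergodic averages ensure this wake-up chain reaches a positive fraction of the sleeping particles in $\ii{-n,n}$, with probability uniform in $n$. Letting $n \to \infty$ would then produce an infinite avalanche with positive probability.

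I expect this last step to be the principal obstacle. In the i.i.d.\ setting of \cite{explosivity}, site independence yields concentration inequalities and a renewal structure that drive the transfer; these tools are unavailable for general ergodic $\sigma$, so the argument must rely only on translation invariance and ergodicity. The ``conceptually simpler proof'' alluded to in the introduction suggests that the authors either extract a consequence of the density conjecture that applies uniformly to ergodic configurations with only a single active particle, or else use the Abelian property in a sufficiently clever order that bypasses concentration arguments altogether.
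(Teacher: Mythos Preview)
Your subcritical argument is correct and coincides with the paper's.

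For the supercritical direction you have correctly located the crux---transferring activity from the fully-awake system to the one with a single activation---but your proposal stops at a hope rather than a mechanism, and the density conjecture by itself (convergence of the stable density to $\crit$) is too coarse to drive such a transfer. The paper closes the gap with two ingredients you are missing.

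First, the \emph{preemptive abelian property} (\thref{lem:preemptive}): if a set $U$ is visited during stabilization of $\config$ on $V$, then stabilizing $\config$ and $\wake{U}\config$ on $V$ yield the same odometer. This is exactly the ``transfer'' tool: once the lone active particle has visited $\ii{-m,m}$, the remaining stabilization is \emph{identical} to the one in which every particle on $\ii{-m,m}$ was awake from the start.

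Second, instead of the density conjecture, the paper uses an exponential tail bound \cite[Proposition~4.1]{cutoff}: under the deterministic conditions \eqref{eq:center.of.mass}--\eqref{eq:max.density} (which follow from the ergodic theorem via \thref{lem:averages}), the probability that stabilizing $\wake{\ii{1,k}}\config$ on $\ii{1,k}$ sends no particle to $k{+}1$ is at most $Ce^{-ck}$. Combined with the preemptive abelian property, this bootstraps: if all of $\ii{1,n}$ is visited but some site in $\llbracket n{+}1,\infty)$ is not, then the event above must occur for some $k\ge n$, so the probability that activity fails to reach all of $\ZZ_{>0}$ is at most $\sum_{k\ge n}Ce^{-ck}$ (\thref{prop:right.avalanche}).

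The proof is then short: let the single active particle at the origin walk until it sleeps; with positive probability it visits $\ii{-m,m}$ for $m$ chosen so large that $Ce^{-cm}<1/2$. Conditional on this, the unused instruction stacks are still i.i.d., so the exponential bounds apply on each half-line, and a union bound gives that all of $\ZZ$ is visited with positive probability. One more application of \thref{lem:preemptive} then identifies the odometer with that of the fully-awake system, which is infinite a.s.\ by \cite{RollaSidoraviciusZindy19}. Your ``wake-up chain'' intuition is in the right direction, but without the preemptive abelian property to formalize the bootstrap and the summable exponential bound to control each step, the argument cannot be completed.
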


In particular, if $\density>\crit$ and the initial distribution
is not supported on stable configurations, then ARW remains active a.s.\ by the zero-one law \cite[Lemma~4]{rolla2012absorbing}.

We mention that we assume that the underlying random walks in our model are symmetric, nearest-neighbor random walks. In addition to holding
in any dimension, Rolla, Sidoravicius, and Zindy's original result on the universality of the critical density allows for a wider class of random walks (with the critical density depending on the walk).
While the density conjecture has been proven for biased random walks \cite{forienDensity},
the exponential upper tail bound for density after stabilization \cite[Proposition~7.1 and Theorem~8.4]{HoffmanJohnsonJunge24} is used in this paper in an essential way via
\cite[Proposition~4.1]{cutoff} but
remains unproven for the biased case.

\begin{remark}
  For ARW on regular trees, Sidoravicius, Rolla, and Zindy's result also holds,
  establishing that there is a critical density separating fixation and activity 
  for ergodic initial distributions with all particles active \cite[Section~12.2]{rolla2020activated}.
  This critical density is known to be strictly between $0$ and $1$ \cite[Theorem~1.6]{StaufferTaggi18}.
  It is proven in \cite{JohnsonRolla19} that for ARW with sleep rate $\lambda=0$,
  there exist i.i.d.\ initial distributions of sleeping particles of arbitrarily high density such that
  every site is visited finitely many times a.s.\ when a single particle is activated.
  By monotonicity in the sleep rate, we can conclude that for ARW with any sleep rate $\lambda>0$
  on a regular tree,
  there exist i.i.d.\ distributions of sleeping particles with supercritical density that are \emph{not}
  explosive, in contrast with \thref{thm:explosive}.
  
  When activating a single particle starting from such an initial configuration, we have local fixation
  a.s., but it is unclear if the system fixates globally as it would for a nonexplosive configuration
  on a lattice. On a tree,
  it is plausible that there is a regime where activity ceases locally but persists somewhere on
  the graph for all time, as occurs for the contact process on trees \cite{pemantle1992contact}.
\end{remark}

\section{Proofs}

We use the notation $\ii{a,b} := [a,b] \cap \ZZ$ for any $a,b \in \ZZ$. We use the site-wise construction of ARW, in which each site contains a stack of jump and sleep instructions,
and an active particle follows the next instruction on its stack
after exponentially distributed waiting times.
If the instructions are i.i.d., with sleep instructions occurring with probability
$\lambda/(1+\lambda)$ and jump instructions equally likely to be to the left
or right, the resulting process matches the continuous-time Markov chain described in
the introduction. 
See \cite{rolla2020activated} for a more formal description.

A configuration is said to be \emph{stable} on $V \subseteq \ZZ$
if it contains no active particles on $V$.
If there is an active particle at site~$v$, then it is \emph{legal}
to \emph{topple} $v$, which means executing the next instruction on the stack at $v$ and updating
the configuration accordingly. The \emph{odometer} for a finite sequence of topplings within a set of
vertices $V$ is the function $V\to \NN$ counting the number of times each site is toppled.
The \emph{stabilizing odometer} of a configuration $\config$ on a set $V$ is the site-wise supremum of the odometers of all finite sequences of legal topplings within $V$.
When $V$ is finite, the stabilizing odometer is a.s.\ finite at every site, and corresponds to a sequence of topplings that leaves a stable configuration on $V$ (by the abelian property, all such sequences will have the same odometer).
When the stabilizing odometer is not finite at every site, the term is technically a misnomer, since the configuration is not locally stabilized.
Fixation of ARW on $\ZZ$ starting from configuration $\config$ is equivalent to the statement that the stabilizing odometer for $\config$ on $\ZZ$ is finite at all vertices.

% The main virtue of the site-wise construction is its \emph{abelian property}:
% All legal sequences of topplings within a finite set of vertices $V$  that leave a \emph{stable} configuration on $V$ have the same odometer \cite[Lemma~2.4]{rolla2020activated}.
% If such a sequence of topplings exists on $V$ starting with configuration $\config$, then we call its odometer the \emph{stabilizing odometer} for $\config$ on $V$.
%For an infinite set $V$, we define the stabilizing odometer for $\config$ on $V$ as the pointwise limit of the stabilizing odometers for $\config$ on $V_n$, where $V_1\subseteq V_2\subseteq\cdots$ is any sequence of sets increasing to $S$. If the stabilizing odometers exist on finite sets (which for our usual case of i.i.d.\ instructions is the case almost surely), then this limit exists and does not depend on the choice of sequence (see \cite[Lemma~2.5 and eq.~(2.6)]{rolla2020activated}).
%The limit may be infinite, in which case the term \emph{stabilizing odometer} is a misnomer since the configuration is never stabilized.
%Fixation of ARW on $\ZZ$ starting from configuration $\config$ is equivalent to the statement that the stabilizing odometer for $\config$ on $\ZZ$ is finite at all vertices.

For odometers $u$ and $u'$, we denote pointwise domination by $u\leq u'$.
We do the same for particle configurations under the ordering $0 < \s < 1 < 2 \ldots$. We say that the subset $U \subseteq V$ is \emph{visited} during the stabilization of $\config$ on $V$ if the stabilizing
 odometer is positive everywhere on $U$, or, equivalently, if at each site of $U$ an active particle is present
 at some time during the process. For a given configuration $\config$ and set of vertices $U$,
 let $\wake{U}\config$ denote the configuration obtained from $\config$ by waking all particles in $U$.

The sketch of the proof of \thref{thm:explosive} goes like this.
To show activity starting from $\wake{\{0\}}\config$, it will be enough to show
that every site in $\ZZ$ is visited.
Begin the stabilization by toppling one of the particles at the origin until it falls asleep, assuming there is at least one. 
Suppose that we get lucky and visit a large interval $\ii{-n,n}$. Knowing that all particles
on this interval eventually become active, their supercritical density together with 
\cite[Proposition~4.1]{cutoff} imply that it is exponentially likely we visit $n+1$ and $-n-1$.
Now, knowing that all particles on $\ii{-n-1,n+1}$ eventually become active, we repeat
the argument to conclude that it is exponentially likely we visit $n+2$ and $-n-2$.
Continuing like this, we conclude that given our lucky first particle visiting $\ii{-n,n}$,
all of $\ZZ$ is visited with probability at least $1-Ce^{-cn}$ for some constants $c,C>0$.
We only need to choose $n$ large enough to make this quantity positive to complete the proof.

One tool we need in carrying out this plan is the \emph{preemptive abelian property},
which states that if a sleeping particle will eventually be woken,
we can wake it from the beginning without altering the stabilizing odometer of the system.
We note that this is a deterministic statement (and actually holds for any graph).
\begin{lemma}\thlabel{lem:preemptive}
  Let $V\subseteq\ZZ$, and
  suppose that $U\subseteq V$ is a collection of sites visited during
  the stabilization of $\config$ on $V$. Then stabilizing $\config$ or $\wake{U}\config$
  on $V$ yields the same odometer.
\end{lemma}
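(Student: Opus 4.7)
The plan is to establish $u \le u'$ and $u' \le u$ between the stabilizing odometers $u$ of $\sigma$ and $u'$ of $\wake{U}\sigma$ on $V$. The first direction $u \le u'$ follows from monotonicity of the stabilizing odometer in the initial configuration, a standard corollary of the abelian property, since $\sigma \le \wake{U}\sigma$ in the ordering $0 < \s < 1 < 2 < \cdots$. For the reverse $u' \le u$, by the abelian property it suffices to exhibit a single legal toppling sequence for $\wake{U}\sigma$ on $V$ whose odometer equals $u$ and which leaves a configuration stable on $V$; the odometer of any such sequence is then automatically $u'$, forcing $u' = u$.

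To build that sequence, I would fix a legal toppling sequence $(v_1, v_2, \dots)$ stabilizing $\sigma$ on $V$ with odometer $u$, and apply the very same sequence to $\wake{U}\sigma$. Denoting by $\sigma_t$ and $\tau_t$ the configurations after $t$ topplings, the core claim is the invariant that $\sigma_t = \tau_t$ on $V$ except at those sites $x \in U$ with $\sigma(x) = \s$ not yet visited in the $\sigma$-process, where $\sigma_t(x) = \s$ and $\tau_t(x) = 1$. The crucial point is that such an unvisited $x$ carries no active particle in $\sigma_t$, so $(v_i)$ cannot topple $x$ during this phase; and at the moment a neighbor's jump instruction first delivers a particle to $x$, both processes simultaneously become two-active at $x$---the visitor together with either the freshly woken sleeper in $\sigma$ or the pre-existing active particle in $\tau$---and they then evolve identically at $x$. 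Thus every toppling is legal for $\tau$ with the same effect, and since by hypothesis every $x \in U$ is visited during the $\sigma$-stabilization, the final configurations satisfy $\tau_N = \sigma_N$ on $V$ and are therefore stable.

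The main technical obstacle I anticipate is the case where $V$ is infinite and $u$ is not realized by any single finite toppling sequence. I would resolve this by exhausting $V$ with finite subsets $V_n \uparrow V$, applying the finite-$V$ argument on each $V_n$, and taking monotone pointwise limits of the stabilizing odometers, using the standard compatibility between stabilization on $V_n$ and on $V$. Apart from this bookkeeping, the remainder is a routine induction verifying the invariant above, with the only real case analysis being what happens at sites in $U$ and at their neighbors when the former are first visited.
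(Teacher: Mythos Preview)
Your finite-$V$ coupling argument is correct and amounts to a self-contained proof of what the paper simply cites as \cite[Corollary~3.2]{cutoff}. The invariant you isolate---that the two processes differ only at not-yet-visited sites $x\in U$ with $\sigma(x)=\s$, where one carries $\s$ and the other $1$---is exactly right, and the synchronization when a particle first arrives at such $x$ works as you say.

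The infinite-$V$ extension, however, has a subtle gap. Your plan is to ``apply the finite-$V$ argument on each $V_n$'' and pass to the limit, but the hypothesis of the finite case---that every site of $U\cap V_n$ is visited---is given only for stabilization on $V$, not on $V_n$. If some $x\in U\cap V_n$ with $\sigma(x)=\s$ is \emph{not} visited when stabilizing $\sigma$ on $V_n$, then after running your coupling on $V_n$ the $\tau$-configuration still has an active particle at $x$ and is not stable, so you cannot conclude $u_n=u'_n$. The paper handles this with a two-index argument: fix $i$, observe that $u'_i$ depends only on the finite set $U\cap V_i$, and then take $n$ large enough that $u_n>0$ on all of $U\cap V_i$ (possible since $u_n\nearrow u$ and $u>0$ on $U$). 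The finite lemma then shows $u_n$ also stabilizes $\wake{U\cap V_i}\sigma$ on $V_n$, whence $u_n\ge u'_i$ by monotonicity in the domain; letting $n\to\infty$ and then $i\to\infty$ gives $u\ge u'$. Your scheme is easily repaired this way, but the single-index version as stated does not go through.
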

\begin{proof}
  This result is already proven when $V$ is finite \cite[Corollary~3.2]{cutoff}.
  We extend it to infinite $V$ as follows. Let $u$ and $u'$ be the odometers
  stabilizing $\config$ and $\wake{U}\config$, respectively, on $V$.
  We have $u\leq u'$ by monotonicity \cite[Lemma~2.5]{rolla2020activated}. For the other direction, let $V_1\subseteq V_2\subseteq\cdots$ 
  be an arbitrary sequence of finite sets whose union is $V$.
  By definition of stabilizing odometer, we have $u_i\nearrow u$ and $u'_i\nearrow u'$, where
  $u_i$ and $u'_i$ are the odometers stabilizing
  $\config$ and $\wake{U}\config$, respectively,  on $V_i$. Thus it suffices to show $u \ge u'_i$ for all $i \ge 1$. Note that only particles awoken on $V_i$ affect $u'_i$, so $u'_i$ is the the odometer stabilizing $\wake{U \cap V_i}\config$ on $V_i$. Also, $u_n$ is positive on $U \cap V_i$ for all large $n$, since it is a finite set and $u_n \nearrow u$. By the finite-graph version of the preemptive abelian property
  \cite[Corollary~3.2]{cutoff} then, $u_n$ is the odometer stabilizing $\wake{U \cap V_i}\config$ on $V_n$, and so $u_n \ge u'_i$ by monotonicity \cite[Lemma~2.5]{rolla2020activated}.
\end{proof}

Pertaining to our plan of establishing that activity nucleates out from $\ii{-n,n}$, 
we define random variables measuring how far activity spreads once the particles on
$\ii{-n,n}$ are woken.
\begin{definition}[$X(n, \config)$, $Y(n, \config)$, $X(n, \config,u_0)$, $Y(n,\config,u_0)$]
  Given $n\ge 1$ and a particle configuration $\config$,
  we define $n<X(n, \config)\leq\infty$ as the leftmost unvisited site in $\llbracket n+1,\infty)$
  when stabilizing $\wake{\ii{1,n}}\config$ on $\ZZ_{>0}$. 
  We let $X(n,\config)=\infty$ when all sites in $\llbracket n+1,\infty)$ are visited.
  Similarly, we define $-\infty \le Y(n, \config) < -n$ as the rightmost unvisited site in
  $(-\infty,-n-1\rrbracket$ when stabilizing $\wake{\ii{-n,-1}}\config$
  on $\ZZ_{<0}$.
  
  We will sometimes wish to consider these random variables starting midstream in an ARW system, i.e., after
  some instructions have already been executed. For a given odometer
  $u \colon \ZZ \to \NN$, we write $X(n, \config, u)$ and $Y(n, \config, u)$ to indicate
  the corresponding quantities for the system in which the first instruction executed at site~$i$
  begins at index $u(i)$ rather than $0$. The initial configuration is still taken to be $\config$.
\end{definition}

\begin{prop}\thlabel{prop:right.avalanche}
  For some $\epsilon,\beta>0$ and some $N\geq 1$,
  let $\config \colon \ZZ \to \NN \cup \{\s\}$ be a deterministic configuration, which for all $n \ge N$ satisfies
  \begin{align}
    \sum_{j=1}^n j\abs{\config(j)} &\geq \frac{(\crit+\epsilon)n^2}{2}\label{eq:center.of.mass}\\
    \intertext{and}
    \sum_{j=1}^n\abs{\config(j)}&\leq \beta n.\label{eq:max.density}
  \end{align}
  % For $n \ge 1$, 
  % let $\config_n$ denote the configuration $\config$ after activating all particles in $\ii{1,n}$. 
  % let $0 \le \Xn \le \infty$ denote the largest site visited when stabilizing $\config_n$ on $\ZZ_{>0}$, where if no site is visited we let $\Xn = 0$ and if infinitely many are we let $\Xn = \infty$. 
  Then for constants $c,C>0$ depending only on $\epsilon$, $\beta$ and $\lambda$,
  \begin{align*}
    \P(\Xn<\infty) &\leq Ce^{-cn},
  \end{align*}
  for all $n \ge N$.
\end{prop}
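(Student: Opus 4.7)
The plan is to use the preemptive abelian property of \thref{lem:preemptive} to reduce the multi-step event $\{X(n, \config) < \infty\}$ to a disjoint union of ``one-step failures,'' each of which can be bounded by the supercritical nucleation estimate \cite[Proposition~4.1]{cutoff}. Since the hypotheses \eqref{eq:center.of.mass} and \eqref{eq:max.density} continue to hold at every scale $m \geq N$, these one-step bounds are uniformly exponential, and a geometric summation closes the argument.

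More precisely, I would first write
\[
\P\bigl(X(n, \config) < \infty\bigr) = \sum_{k=n+1}^{\infty} \P\bigl(X(n, \config) = k\bigr)
\]
and establish, for each $k > n$, the pathwise inclusion $\{X(n, \config) = k\} \subseteq \{X(k-1, \config) = k\}$. On the event $\{X(n, \config) = k\}$, every site of $U := \ii{n+1, k-1}$ is visited during the stabilization of $\wake{\ii{1,n}}\config$ on $\ZZ_{>0}$, while site $k$ is not. Applying \thref{lem:preemptive} with $V = \ZZ_{>0}$ shows that stabilizing $\wake{U}\wake{\ii{1,n}}\config = \wake{\ii{1,k-1}}\config$ on $\ZZ_{>0}$ yields the same odometer, hence still leaves $k$ unvisited, and since $X(k-1, \config) \geq k$ by definition the inclusion follows. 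Reindexing, this gives
\[
\P\bigl(X(n, \config) < \infty\bigr) \leq \sum_{m=n}^{\infty} \P\bigl(X(m, \config) = m+1\bigr).
\]

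It remains to establish the single-step bound $\P\bigl(X(m, \config) = m+1\bigr) \leq C' e^{-c' m}$ for every $m \geq N$, after which a geometric summation completes the proof. For this I would invoke \cite[Proposition~4.1]{cutoff} directly: the hypotheses \eqref{eq:center.of.mass} and \eqref{eq:max.density} persist for each such $m$ and match the input format of that result, which quantifies how a supercritical wakeful interval forces the avalanche to propagate one site further. This single-step estimate is the main obstacle, in the sense that it carries all the nontrivial probabilistic content of the proposition; the rest of the argument is deterministic bookkeeping using the abelian framework. The upper-density bound \eqref{eq:max.density} enters only through \cite[Proposition~4.1]{cutoff}, which in turn relies on the exponential upper-tail control of stabilization odometers from \cite{HoffmanJohnsonJunge24}.
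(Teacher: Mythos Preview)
Your proposal is correct and follows essentially the same approach as the paper: decompose $\{X(n,\config)<\infty\}$ according to the site where the avalanche stalls, use the preemptive abelian property (\thref{lem:preemptive}) to upgrade $\wake{\ii{1,n}}$ to $\wake{\ii{1,k-1}}$, and then invoke \cite[Proposition~4.1]{cutoff} at each scale and sum geometrically. The only cosmetic difference is that the paper phrases the one-step failure as the finite-interval event $E_k=\{\text{stabilizing }\wake{\ii{1,k}}\config\text{ on }\ii{1,k}\text{ sends no particle to }k{+}1\}$, which is exactly the input of \cite[Proposition~4.1]{cutoff}, whereas your event $\{X(m,\config)=m{+}1\}$ lives on $\ZZ_{>0}$; since $\{X(m,\config)=m{+}1\}\subseteq E_m$ (by abelianness, if no particle reaches $m{+}1$ on $\ZZ_{>0}$ then none does on $\ii{1,m}$), your bound follows just the same.
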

\begin{proof}
  Let $E_k$ be the event that stabilizing $\wake{\ii{1,k}}\config$ on $\ii{1,k}$ sends no particle
  to $k+1$. We claim that if $\Xn <\infty$, then $E_k$ holds for some $k\geq n$.
  Indeed, if $\Xn<\infty$, then $\wake{\ii{1,n}}\config$ and 
  $\wake{\ii{1, \Xn-1}}\config$ have the same stabilizing odometer on $\ZZ_{>0}$
  by \thref{lem:preemptive}, implying that $E_{\Xn-1}$ holds.
  
  Thus we have
  \begin{align*}
    \P(\Xn<\infty) &\leq \sum_{k=n}^{\infty}\P(E_k).
  \end{align*}
  By \cite[Proposition~4.1]{cutoff}, from
  \eqref{eq:center.of.mass} and \eqref{eq:max.density}
  for all $k\geq n$ we have $\P(E_k)\leq Ce^{-ck}$ for constants
  $c,C>0$ depending only on $\epsilon$, $\beta$ and $\lambda$.
\end{proof}

We now state a technical lemma about real sequences, which we use to show that conditions~\eqref{eq:center.of.mass} and~\eqref{eq:max.density} hold almost surely for $n$ large enough.

\begin{lemma}\thlabel{lem:averages}
  Let $a_1,a_2,\ldots\in\RR$ and suppose that
  \begin{align}\label{eq:erg}
    \lim_{n\to\infty}\frac{1}{n}\sum_{j=1}^na_j=\density.
  \end{align}
  Then
  \begin{align}\label{eq:weighted}
    \lim_{n\to\infty}\frac{1}{n^2}\sum_{j=1}^nja_j= \frac{\density}{2}.
  \end{align}
\end{lemma}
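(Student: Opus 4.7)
The plan is to reduce the weighted average in \eqref{eq:weighted} to the ordinary average in \eqref{eq:erg} via summation by parts (Abel summation). Let $S_n := \sum_{j=1}^n a_j$, so that the hypothesis says $S_n/n\to\density$. With $S_0:=0$, summation by parts gives
\begin{align*}
\sum_{j=1}^n j a_j \;=\; \sum_{j=1}^n j(S_j - S_{j-1}) \;=\; n S_n - \sum_{j=1}^{n-1} S_j.
\end{align*}
Dividing by $n^2$,
\begin{align*}
\frac{1}{n^2}\sum_{j=1}^n j a_j \;=\; \frac{S_n}{n} \;-\; \frac{1}{n^2}\sum_{j=1}^{n-1} S_j,
\end{align*}
so the first summand tends to $\density$ and it will suffice to show that the second tends to $\density/2$.

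For this second piece, the idea is that $S_j\approx \density j$ for large $j$, so $\sum_{j=1}^{n-1} S_j \approx \density \cdot n(n-1)/2 \approx \density n^2/2$. I would make this quantitative in the standard way: fix $\epsilon>0$ and choose $N_\epsilon$ so that $|S_j - \density j|\leq \epsilon j$ for all $j\geq N_\epsilon$. Split the sum as $\sum_{j=1}^{N_\epsilon-1}S_j + \sum_{j=N_\epsilon}^{n-1}S_j$; the first part is a fixed constant contributing $O(1/n^2)\to 0$, while the second part lies within $\epsilon \sum_{j=N_\epsilon}^{n-1} j \leq \epsilon n^2/2$ of $\density \sum_{j=N_\epsilon}^{n-1}j$, whose normalization by $n^2$ tends to $\density/2$. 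Taking $\limsup$ and then $\epsilon\downarrow 0$ yields $\frac{1}{n^2}\sum_{j=1}^{n-1}S_j \to \density/2$, completing the proof.

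There is no substantive obstacle here: this is a Cesàro-type statement and summation by parts does all the work. The only thing to be careful about is keeping the error terms and the boundary terms in the Abel identity straight, but nothing beyond bookkeeping is required.
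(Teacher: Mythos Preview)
Your proof is correct and follows essentially the same route as the paper: both use the Abel/summation-by-parts identity $\sum_{j=1}^n j a_j = nS_n - \sum_{j=1}^{n-1}S_j$ and then argue that $\frac{1}{n^2}\sum_{j=1}^{n-1}S_j\to\rho/2$ by a Ces\`aro-type argument. The only cosmetic difference is that the paper first reduces to $\rho=0$ and then quotes Ces\`aro means, whereas you carry the general $\rho$ through and spell out the $\epsilon$--$N$ estimate directly.
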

\begin{proof}
It is enough to prove the statement when $\rho=0$, since then the general statement
can then be deduced by considering $a'_j=a_j-\rho$.
For every~$n\geq 1$, defining~$S_n=\sum_{j=1}^n a_j$, we can write
\[
\sum_{j=1}^n
j a_j
=
\sum_{1\leq i\leq j\leq n}
a_j
=
\sum_{i=1}^n
(S_n-S_{i-1})
=
nS_n-\sum_{i=1}^{n-1}S_i
\,,
\]
whence
\[
\frac 1{n^2}
\sum_{j=1}^n
j a_j
=
\frac{S_n}{n}
-\frac 1{n^2}
\sum_{i=1}^{n-1}S_i
\,.
\]
The first term tends to~$0$ by our assumption~\eqref{eq:erg}.
To bound the second term, we write
\begin{align*}
\frac{1}{n^2}\sum_{i=0}^{n-1} \abs{S_i} 
&\leq
\frac{1}{n}\sum_{i=1}^{n-1}
\abs[\bigg]{\frac{S_i}{i}},
\end{align*}
  and then we observe that since $\abs{S_i/i}$ converges to $0$, so do the 
  Ces\`aro means $\frac{1}{n}\sum_{i=1}^{n-1}\abs{S_i/i}$, which
concludes the proof.
\end{proof}

\begin{prop}\thlabel{prop:det.explosive}
  Let $\config\colon \ZZ\to\NN\cup\{\s\}$ be a configuration of particles that contains
  at least one active particle and satisfies
  \begin{align}\label{eq:det.explosive}
    \lim_{n\to\infty}\frac 1 n \sum_{j=1}^n\abs{\config(j)}= \lim_{n\to\infty}\frac 1 n \sum_{j=-n}^{-1}\abs{\config(j)}>\crit.
  \end{align}
  Then it occurs with positive probability that all sites in $\ZZ$ are visited
  when starting ARW from $\config$.
\end{prop}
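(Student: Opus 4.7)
The plan is to realize the ``nucleation-then-avalanche'' strategy outlined in the introduction by combining a positive-probability nucleation event with a single application of \thref{prop:right.avalanche} on each side.

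First, I would apply \thref{lem:averages} once to $(\abs{\config(j)})_{j \geq 1}$ and once to $(\abs{\config(-j)})_{j \geq 1}$; combined with hypothesis~\eqref{eq:det.explosive}, this gives $\frac{1}{n^2}\sum_{j=1}^n j\abs{\config(\pm j)} \to \density/2 > \crit/2$. Fixing $\epsilon \in (0, (\density-\crit)/2)$ and $\beta > \density$, both $\config$ and its reflection about the origin satisfy \eqref{eq:center.of.mass} and \eqref{eq:max.density} for every $n$ beyond some threshold $N$, and \thref{prop:right.avalanche} then furnishes constants $c, C > 0$ with $\P(\Xn < \infty)$ and $\P(\Yn > -\infty)$ both at most $Ce^{-cn}$ for all $n \geq N$.

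Next, I would fix $n \geq N$, let $x_0$ be a site where $\config$ has an active particle, and introduce the ``lucky walk'' event $F$ that fixes finitely many specific jump instructions at the sites of $\ii{-n, n}$ so that the closed walk $x_0 \to n \to -n \to x_0$ can be realized by a sequence of legal topplings. Because $F$ constrains only a bounded number of i.i.d.\ stack entries, $\P(F) > 0$. On $F$, every site of $\ii{-n, n}$ is visited, every sleeping particle in $\ii{-n, n}$ is woken by the walker, and since the walk returns to its starting point the net particle count at every site is preserved; hence the post-walk configuration $\config''$ coincides with $\wake{\ii{-n, n}}\config$, and in particular $\config''|_{\llbracket 1, \infty)} = \wake{\ii{1, n}}\config|_{\llbracket 1, \infty)}$ together with the analogous identity on the negative half-line. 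The key observation is that, conditional on $F$, the continuation of the stabilization uses instruction stacks shifted by the walk's odometer $u_F$, and these shifted stacks are fresh i.i.d.\ entries independent of $F$. Consequently $X(n, \config, u_F)$ conditioned on $F$ has the same distribution as $\Xn$ unconditionally, and likewise for $Y$. Combining the configurational identities with monotonicity of the stabilizing odometer---under pointwise domination of initial configurations and under enlargement of the stabilization domain from $\ZZ_{>0}$ to $\ZZ$---one sees that on $F$, $\{X(n, \config, u_F) = \infty\}$ forces every site of $\llbracket n+1, \infty)$ to be visited in the $\ZZ$-stabilization of $\config$, and symmetrically for $Y(n, \config, u_F)$ on $(-\infty, -n-1\rrbracket$. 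A union bound then yields $\P(\text{every site of } \ZZ \text{ is visited} \mid F) \geq 1 - 2Ce^{-cn}$, so taking $n$ large enough that $2Ce^{-cn} < 1$ gives $\P(\text{every site of } \ZZ \text{ is visited}) \geq \P(F)(1 - 2Ce^{-cn}) > 0$, as desired.

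The piece I expect to be most delicate is the configurational bookkeeping behind the identity $\config'' = \wake{\ii{-n, n}}\config$: one must verify that the closed-walk structure, together with the absence of sleep instructions and the waking of every sleeping particle the walker encounters, really produces this exact equality, rather than merely a pointwise inequality. Once that step and the shifted-stack independence claim are in place, the remaining argument is a clean assembly of \thref{prop:right.avalanche}, the monotonicity principles used in \thref{lem:preemptive}, and the conditional avalanche bound; in particular, this route avoids any iterative control on a sequence of nested events and thereby sidesteps any quantitative matching of decay rates between the nucleation and avalanche stages.
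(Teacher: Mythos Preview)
Your proposal is correct and follows essentially the same strategy as the paper's proof: verify the hypotheses of \thref{prop:right.avalanche} via \thref{lem:averages}, condition on a positive-probability nucleation event in which a single active particle visits all of $\ii{-n,n}$, observe that the remaining instruction stacks are fresh so that $X(n,\config,u_F)$ and $Y(n,\config,u_F)$ inherit the unconditional laws of $\Xn$ and $\Yn$, and conclude by a union bound. The only difference is cosmetic---you prescribe a deterministic closed walk $x_0\to n\to -n\to x_0$ (which gives the exact identity $\config''=\wake{\ii{-n,n}}\config$ and makes the stack-independence claim transparent, provided you take $n\geq|x_0|$), whereas the paper lets the particle wander until it sleeps and conditions on its range containing $\ii{-m,m}$---but the two nucleation devices are interchangeable.
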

\begin{proof}
  By shifting $\config$, we can take it to have an active particle at site~$0$ without loss of generality.
  Let $\rho$ be the value of the limits in \eqref{eq:det.explosive}.
  By \thref{lem:averages},
  \begin{align*}
    \lim_{n\to\infty}\frac{1}{n^2}\sum_{j=1}^n j\abs{\config(j)}= \frac{\density}{2}.
  \end{align*}
  Thus, for $\epsilon=(\density-\crit)/2$ and $\beta = 2\density$, there exists
  some $N$ large enough that the conditions of \thref{prop:right.avalanche}
  hold, and we can conclude that
  \begin{align*}
    \P(\Xn <\infty) &\leq Ce^{-cn}
  \end{align*}
  for all $n\geq N$. By the same argument applied to $\config$ flipped across the origin,
  there exists some $N'$ such that
  \begin{align*}
      \P(\Yn >-\infty) &\leq Ce^{-cn}
  \end{align*}
  for all $n\geq N'$, with the same constants $c,C>0$.
  
  Choose $m$ larger than $N$ and $N'$ and satisfying $Ce^{-cm}<1/2$.
  We stabilize $\config$ on $\ZZ$ as follows.
  First, topple an active particle at the origin until it falls asleep.
  Let $V_0\subseteq\ZZ$ be the interval of sites it visits, and let $u_0$ be the system's
  odometer after it sleeps. Conditional on $V_0 \supseteq \ii{-m,m}$, the random variables
  $X(m, \config, u_0)$ and $Y(m, \config, u_0)$ are distributed
  as $X(m,\config)$ and $Y(m,\config)$, respectively, since
  starting the instruction stacks at the indices given by $u_0$ does not
  alter their distributions.

  Now, the event $\{V_0 \supseteq \ii{-m,m}\}$ holds with positive probability, and
  \begin{align*}
    \P\bigl(X(m, \config, u_0)<\infty\bigmid V_0 \supseteq \ii{-m,m}\bigr)&\leq Ce^{-cm}<1/2\\\intertext{and}
    \P\bigl(Y(m, \config, u_0)>-\infty\bigmid V_0 \supseteq \ii{-m,m}\bigr)&\leq Ce^{-cm}<1/2.
  \end{align*}
  Thus by a union bound, it occurs with positive probability conditional on 
  the event $\{V_0 \supseteq \ii{-m,m}\}$ that 
  events $\{X(m, \config, u_0)=\infty\}$ and $\{Y(m, \config, u_0)=-\infty\}$  both occur.
  When all three events occur, which occurs with positive probability,
  all sites in $\ZZ$ are visited during the stabilization of $\config$, by definition of $X$ and $Y$.
\end{proof}

\begin{proof}[Proof of \thref{thm:explosive}]
 If $\density<\crit$, then ARW fixates even starting from $\config$ with all particles activated
  from the beginning by \cite[Theorem~1]{RollaSidoraviciusZindy19}. 
  Monotonicity of the odometer when activating particles \cite[Lemma~2.5]{rolla2020activated}
  implies that $\config$ is not explosive in this case.
  
  If $\density>\crit$, then the ergodic theorem and \thref{prop:det.explosive}, stabilizing
  the configuration $\config$ after activating the particles at any nonvacant site
  visits all of $\ZZ$ with positive probability. On this event, the odometer stabilizing $\config'$ on $\ZZ$ is unchanged by preemptively waking all particles in $\config'$ by \thref{lem:preemptive}. But by \cite[Theorem~1]{RollaSidoraviciusZindy19},
   that odometer is infinite a.s., and so fixation does not occur. 
\end{proof}

\section*{Acknowledgments}
C.H.\ was partially supported by NSF grant DMS-2503778.
T.J.\ was partially supported by NSF grant DMS-2503779 and gratefully acknowledges support from Uppsala University and the Wenner--Gren Foundation.
J.R.\ was partially supported by a Simons Foundation Targeted Grant awarded to the Rényi Institute.
L.R.\ was partially supported by FAPESP grant 2023/13453-5.
The authors thank the Erdős Center for its hospitality
in hosting a workshop on Activated Random Walk where this research was carried out.

  \bibliographystyle{amsalpha}
  \bibliography{main}

\end{document}